\documentclass[11pt]{amsart}
\usepackage{amsthm}
\usepackage{amsmath}
\usepackage[all]{xy}
\usepackage{amsfonts}
\usepackage{amssymb}
\usepackage{amscd}
\usepackage{comment}

\newtheorem{theorem}{Theorem}
\newtheorem{proposition}{Proposition}

\newtheorem{lemma}{Lemma}

\newcommand{\Z}{{\mathbb Z}}
\newcommand{\Q}{{\mathbb Q}}

\DeclareMathOperator{\sgn}{sgn}

\author[D.\,L. Pincus]{David L. Pincus}
\address{Department of Mathematics, University of Maryland, College Park, MD 20742, USA}
\email{pincus@umd.edu}
\author[L.\,C. Washington]{Lawrence C. Washington}
\address{Department of Mathematics, University of Maryland, College Park, MD 20742, USA}
\email{lcw@umd.edu}
\date{\today}
\title[On The Field Isomorphism Problem]{On the Field Isomorphism Problem for the Family of Simplest Quartic Fields}
\subjclass[2020]{11R16,\,11B37}

\begin{document}
\maketitle
\begin{abstract}
Deciding whether or not two polynomials have isomoprhic splitting fields over the rationals is 
the Field Isomorphism Problem.  We consider polynomials of the 
form \mbox{$f_n(x) = x^4-nx^3-6x^2+nx+1$} with $n \neq 3$ a positive integer 
and we let $K_n$ denote the splitting field of $f_n(x)$; a `simplest quartic field'.  
Our main theorem states that under certain hypotheses there can be at most one positive integer $m \neq n$ such that $K_m=K_n$.  The proof relies on the existence of squares in recurrent sequences and a result of J.H.E. Cohn \cite{cohn1}.  
These sequences allow us to establish uniqueness of the splitting field under additional hypotheses in Section~\ref{S:5} 
and to establish a connection with elliptic curves in Section~\ref{S:6}.
\end{abstract}

\section{Introduction}\label{S:1}

Let $n\in \mathbb Z\setminus \{0,\pm 3\}$ and let $K_n=\mathbb Q(\rho)$, where $\rho$ is a root of 
$$
f_n(x)= x^4-nx^3-6x^2+nx+1.
$$
Since $K_{-n}=K_n$, we henceforth restrict $n$ to be a positive integer.
These fields $K_n$ are the {\it simplest quartic fields} of M.-N. Gras \cite{gras2}.  Their Galois groups are cyclic of order 4, given by the identity map and
$$
\sigma:  \rho\mapsto \frac{\rho-1}{\rho+1}, \qquad \sigma^2:  \rho\mapsto \frac{-1}{\rho}, \qquad \sigma^3: \rho\mapsto \frac{1+\rho}{1-\rho}.
$$

It is natural to ask if $K_m=K_n$ for distinct positive integers $m$ and $n$. 
Hoshi \cite{hoshi1} gives the following examples:
$$
K_1=K_{103}, \qquad K_2 = K_{22}, \qquad K_4 = K_{956},
$$
and proves that these are the only ones with $0<m<n\le 10^5$, and also these are the only ones with $0<m\le 10^3$ with no bound on $n$.

In the following, we take an approach different from the one used by Hoshi and prove the following:
\begin{theorem}\label{mainthm}(a) Let $n\equiv 2\pmod 4$ be a positive integer. There is at most one positive integer $m\ne n$ such that $K_m=K_n$.\newline
 (b) Let $n\equiv 8 \pmod {16}$ be a positive integer and suppose the trace to $\mathbb Q$ of the fundamental unit of $\mathbb Q(\sqrt{n^2+16})$ is odd. There is at most one positive integer $m\ne n$ such that $K_m=K_n$.\newline
(c) Let $n$ be an arbitrary positive integer. There are only finitely many positive integers $m$ such that $K_m=K_n$.
\end{theorem}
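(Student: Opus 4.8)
The plan is to translate the field equality $K_m=K_n$ into a Diophantine condition and then invoke Cohn's theorem on squares in binary recurrent sequences. First I would pin down the subfield structure. Setting $y=\rho-1/\rho$, the relation $f_n(\rho)=0$ yields $y^2-ny-4=0$, so the unique quadratic subfield of $K_n$ is $F_n=\Q(\sqrt{n^2+16})$, and since $\rho=\tfrac12\bigl(y+\sqrt{y^2+4}\bigr)$ we have $K_n=F_n(\sqrt{\alpha_n})$ with $\alpha_n=y^2+4=ny_n+8$, where $y_n$ is a root of $Y^2-nY-4$. A short computation gives $N_{F_n/\Q}(\alpha_n)=4(n^2+16)$, which is the squarefree part of $n^2+16$ times a rational square, consistent with $K_n/\Q$ being cyclic. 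Because $K_n$ is Galois, $K_m=K_n$ as subfields of $\overline{\Q}$ exactly when (i) $F_m=F_n$ and (ii) $\alpha_m\equiv\alpha_n$ in $F^{*}/(F^{*})^{2}$, by Kummer theory for the quadratic extension $K_n/F_n$; here (i) forces a common field $F=\Q(\sqrt d)$, with $d$ the squarefree part of $n^2+16$.

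Next I would extract the recurrence from condition (i). Writing $n^2+16=du^2$ and $m^2+16=dv^2$, condition (i) becomes the Pell-type equation $m^2-dv^2=-16$. For fixed $d$ its solutions split into finitely many classes, each obtained from a fundamental solution by repeated multiplication by the fundamental unit of $\Q(\sqrt d)$; hence the admissible $m$ form finitely many binary recurrence sequences $\{m_k\}$ with companion sequences $\{v_k\}$. Thus $n$ has, a priori, infinitely many partners meeting (i), arranged into finitely many recurrences, and the whole task is to see how condition (ii) thins them out.

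For condition (ii) I would compute modulo squares. Putting $\pi_n=n+u\sqrt d$ one finds $\alpha_n=\tfrac12 u\sqrt d\,\pi_n$, so for a partner $m=m_k$ on a fixed class, with $\pi_{m_k}=\pi_{m_0}\eta^{k}$ and $\eta$ the norm-one fundamental unit, $\alpha_m\alpha_n=\tfrac14 duv_k\,\pi_{m_k}\pi_n$. Since $d=(\sqrt d)^2$ and $\eta^{k}$ are squares in $F^{*}$, the class of $\alpha_m\alpha_n$ in $F^{*}/(F^{*})^{2}$ equals that of $v_k\gamma$ for a fixed $\gamma=u\,\pi_{m_0}\pi_n\in F^{*}$. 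Because $\gamma^{-1}$ must then be a rational class and the kernel of $\Q^{*}/(\Q^{*})^2\to F^{*}/(F^{*})^2$ is $\{1,d\}$, condition (ii) reduces to the assertion that $r\,v_k$ or $rd\,v_k$ is a perfect square for a fixed squarefree integer $r=r(\text{class})$. The sequence $\{v_k\}$ is a Lucas-type binary recurrence, so by Cohn's theorem \cite{cohn1} it takes such (scaled) square values for only finitely many $k$; summing over the finitely many Pell classes yields part (c). As a numerical check, for $n=1$ ($d=17$) this picks out exactly $v=1$ (the value $m=n$) and $v=25=5^2$ (the partner $m=103$), while the next solution $v=41$ is not of the required form.

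The hard part will be making the reduction of (ii) to ``$r\,v_k=\square$'' precise and uniform: one must identify the squarefree scalar $r$ and its companion $rd$, enumerate the finitely many classes of $m^2-dv^2=-16$, and handle the norm of the fundamental unit, which governs whether the class step is literally a square in $F^{*}$ and hence whether an extra parity case of $\eta^{k}$ must be carried along. Once this is in place, Cohn's theorem supplies the finiteness of square terms and part (c) follows. Parts (a) and (b) should then sharpen ``finitely many'' to ``at most one'' by using the congruences $n\equiv 2\pmod 4$, respectively $n\equiv 8\pmod{16}$ together with the parity of the trace of the fundamental unit of $\Q(\sqrt{n^2+16})$, to compute the relevant $2$-adic valuations and show that $\{v_k\}$ admits at most one square of the required shape; for arbitrary $n$ in part (c) only the qualitative finiteness is needed, so this delicate $2$-adic bookkeeping is replaced by a direct appeal to Cohn.
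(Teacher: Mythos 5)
Your reduction is the same as the paper's: identify the quadratic subfield $\Q(\sqrt{n^2+16})$, express $K_n=F(\sqrt{\alpha_n})$ with $\alpha_n$ essentially $\tfrac{u\sqrt d}{2}(u\sqrt d+n)$, translate $K_m=K_n$ via Kummer theory into $\alpha_m\alpha_n\in (F^\times)^2$, and convert the Pell condition $m^2-dv^2=-16$ into terms of a binary recurrence attached to the fundamental unit. That skeleton is correct and matches Sections 2--3 of the paper. But both places where you close the argument have genuine gaps.

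First, part (c). You assert that ``by Cohn's theorem $\{v_k\}$ takes such (scaled) square values for only finitely many $k$.'' The theorem of Cohn in play classifies the solutions of $y^2=P_r(a)P_s(a)$ with $0<r<s$ and $a$ \emph{odd}; it says nothing about the equation $rv_k=\square$ for an arbitrary fixed squarefree $r$, and its hypothesis fails for general $n$ (e.g.\ $n=4$ gives fundamental unit $1+\sqrt2$ with trace $2$, and the paper notes $u_1=1^2$, $u_7=13^2$ are both squares there, corresponding to $K_4=K_{956}$). The finiteness you need is true, but it is a Siegel-type statement: the paper observes that $x\sqrt d+m$ has norm $-16$, hence lies in finitely many classes modulo squares of units, and reduces to finitely many quartic curves $c_i^2(Y^2+16)=dX^4$, to which Siegel's theorem on integral points applies. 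Without substituting Siegel (or an equivalent) for your appeal to Cohn, part (c) does not close.

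Second, parts (a) and (b). Saying you will ``compute the relevant $2$-adic valuations and show that $\{v_k\}$ admits at most one square of the required shape'' is a plan, not an argument, and it misses the two things the congruence hypotheses are actually for: they force $d\equiv 5\pmod 8$ and make $(n_1+y_1\sqrt d)/2$ a \emph{unit of norm $-1$}, hence an odd power $\epsilon^s$ of the fundamental unit, with the trace $t$ of $\epsilon$ odd (automatic when $n\equiv 2\pmod 4$, hypothesized when $n\equiv 8\pmod{16}$). Oddness of $t$ is exactly what makes Cohn's theorem applicable, and oddness of the exponents is what excludes all of Cohn's exceptional triples. The ``at most one'' then comes from a pigeonhole in the two-element group $\{1,d\}\subset \Q^\times/(\Q^\times)^2$: two partners $m,\ell$ give odd indices $r,q$ with $u_ru_s$ and $u_qu_s$ each in $\Z^2$ or $d\Z^2$; Cohn kills the $\Z^2$ option for distinct odd indices, so both products lie in $d\Z^2$, whence $u_ru_q$ is a perfect square, which Cohn kills again. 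This step is absent from your sketch, and it is the entire content of parts (a) and (b).
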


{\bf Remarks.} Part (c) was also  proved by Hoshi, but using work of Siegel  in a form different from what we use. The congruence conditions on $m$ in parts (a) and (b)
are the only ones that allow the fundamental unit of $\mathbb Q(\sqrt{n^2+16})$ to have negative norm and odd trace, which we need in order to apply results of Cohn in Section 3.

\section{Preliminaries}\label{S:2}

Writing
$$
f_n(x)/x^2 = \left(x-\frac1x\right)^2 -n\left(x-\frac1x\right) -4,
$$
we see that the unique quadratic subfield of $K_n$ is $k_n=\mathbb Q\left(\sqrt{n^2+16}\right)$, and
$$
\rho-\frac1{\rho}= \frac{n\pm\sqrt{n^2+16}}{2}.
$$
It follows that
$$
K_n= k_n\left(\sqrt{2\left(n^2+16+n\sqrt{n^2+16}\right)}\right).
$$
If $k_n=\mathbb Q(\sqrt{d})$ with $d$ squarefree, then $dy^2=n^2+16$ for some positive integer $y$,  and $K_n=k_n\left(\sqrt{2(y^2d+ny\sqrt{d}})\right)$.

Suppose now that $K_m=K_n$. Then $k_m=k_n$, so
$$
m^2+16 = dx^2, \qquad n^2+16 = dy^2
$$
for some positive integers $x$ and $y$. Since 
$$
k_m\left(\sqrt{2(x^2d+mx\sqrt{d}})\right)=k_n\left(\sqrt{2(y^2d+ny\sqrt{d}})\right)
$$ 
and $k_m=k_n$, we obtain the following:

\begin{lemma}\label{mainlemma} With the notations above, $K_m=K_n$ if and only if
\begin{equation*}\label{mainequ}
\left(x^2d+mx\sqrt{d}\right)\left(y^2d+ny\sqrt{d}\right)
=
xyd\left(x\sqrt{d}+m\right)\left(y\sqrt{d}+n\right) \in \left(k_n^{\times}\right)^2.
\end{equation*}
\end{lemma}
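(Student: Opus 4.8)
The plan is to establish the "if and only if" by tracking when two quartic fields containing the same quadratic subfield $k_n = k_m = \mathbb{Q}(\sqrt{d})$ coincide. The key structural fact I would exploit is that $K_n = k_n(\sqrt{\alpha_n})$ and $K_m = k_n(\sqrt{\alpha_m})$, where $\alpha_n = 2(y^2 d + n y \sqrt{d})$ and $\alpha_m = 2(x^2 d + m x \sqrt{d})$. For a fixed base field $k_n$, two quadratic extensions $k_n(\sqrt{\alpha})$ and $k_n(\sqrt{\beta})$ are equal if and only if $\alpha\beta \in (k_n^\times)^2$ (equivalently, $\alpha/\beta$ is a square, which for nonzero elements is the same condition). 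So the entire statement reduces to rewriting the square condition $\alpha_m \alpha_n \in (k_n^\times)^2$ in the displayed form.

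First I would record the standard Kummer-theoretic lemma: for a field $F$ of characteristic not $2$ and $\alpha,\beta \in F^\times$, one has $F(\sqrt\alpha) = F(\sqrt\beta)$ precisely when $\alpha\beta \in (F^\times)^2$. (The forward direction follows because $\sqrt\beta = c\sqrt\alpha$ for some $c \in F^\times$ forces $\beta = c^2 \alpha$; the reverse is immediate.) Applying this with $F = k_n$, $\alpha = \alpha_m$, $\beta = \alpha_n$ gives that $K_m = K_n$ iff $\alpha_m \alpha_n \in (k_n^\times)^2$. The factor of $2$ appearing in each of $\alpha_m,\alpha_n$ contributes $4 = 2^2$ to the product, a perfect square in $\mathbb{Q}^\times \subseteq k_n^\times$, so it may be discarded without affecting membership in $(k_n^\times)^2$. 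This leaves the condition
\[
\left(x^2 d + m x \sqrt d\right)\left(y^2 d + n y \sqrt d\right) \in \left(k_n^\times\right)^2.
\]

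Next I would factor each of the two trinomials to produce the right-hand expression. Factoring $x$ out of the first factor and $y$ out of the second gives
\[
\left(x^2 d + m x \sqrt d\right)\left(y^2 d + n y \sqrt d\right)
= x\left(x d + m\sqrt d\right)\, y\left(y d + n \sqrt d\right).
\]
Then pulling one copy of $\sqrt d$ out of $xd + m\sqrt d = \sqrt d\,(x\sqrt d + m)$ and similarly from the other factor produces a total factor of $d$, yielding
\[
xy\,d\,\left(x\sqrt d + m\right)\left(y\sqrt d + n\right),
\]
which matches the right-hand side of the claimed equation. Since each algebraic manipulation is an honest equality in $k_n^\times$ and multiplying by the rational square $4$ does not change the residue class modulo $(k_n^\times)^2$, the membership statement is preserved throughout, completing the equivalence.

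I do not expect a serious obstacle here: the content is entirely the Kummer criterion plus bookkeeping, and the rational prefactors $2$, $4$, $xy$, and $d$ are all either perfect squares in $\mathbb{Q}^\times$ or, in the cases of $xy$ and $d$, retained explicitly on the right-hand side exactly as written. The only point requiring a word of care is the implicit assumption that all the elements involved are nonzero, so that the Kummer lemma applies and division is legitimate; this holds because $n, m$ are positive and $d$ is a positive squarefree integer with $dy^2 = n^2 + 16 > 0$, forcing $x, y, d \neq 0$ and each trinomial factor to be a nonzero element of $k_n$. With that noted, the statement follows directly.
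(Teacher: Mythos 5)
Your proposal is correct and matches the paper's (essentially unstated) argument: the lemma is exactly the standard Kummer criterion that $k_n(\sqrt{\alpha})=k_n(\sqrt{\beta})$ iff $\alpha\beta\in (k_n^{\times})^2$, applied to the radicands $2(x^2d+mx\sqrt{d})$ and $2(y^2d+ny\sqrt{d})$, with the factor $4$ discarded as a rational square and the remaining product factored as displayed. The paper simply asserts this follows from the identifications $K_m=k_m(\sqrt{2(x^2d+mx\sqrt{d})})$ and $K_n=k_n(\sqrt{2(y^2d+ny\sqrt{d})})$ together with $k_m=k_n$, so you have supplied the same proof in more detail.
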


{\bf Remark.} It is possible that $K_m\ne K_n$ even though $k_m=k_n$. For example, $k_{58}=k_2=\mathbb Q(\sqrt{5})$. Write $m^2+16=58^2+16 = 26^2\cdot 5$, so
$x=26$. Write $2^2+16=2^2\cdot 5$, so $y=2$. Then
\begin{align*}
xyd\left(x\sqrt{d}+m\right)\left(y\sqrt{d}+n\right) &= 52\cdot 5(26\sqrt{5}+58)(2\sqrt{5}+2) \\
 &= 13(8\sqrt{5})^2\left(\frac{\sqrt{5}+1}{2}\right)^8
\end{align*}
is not a square in $\mathbb Q(\sqrt{5})$ since $13$ is not a square  in this field. Lemma \ref{mainlemma} tells us that $K_{58}\ne K_2$. We could also deduce this from part (a) of 
Theorem \ref{mainthm} since we already know that $K_2=K_{22}$, and there cannot be a third index giving the same field.

\section{Proof of parts ($a$) and ($b$)}\label{S:3}

\begin{proof}\renewcommand{\qedsymbol}{}
In both part (a) and part (b), $n$ is even. Write $n=2n_1$. Since $d$ is taken to be squarefree,  $y=2y_1$ for an integer $y_1$.  Moreover, our congruence condition on $n$ implies
$d\equiv 5\pmod 8$. We have
$$
\frac{y\sqrt{d} + n}{4}=\frac{y_1\sqrt{d}+n_1}{2}
$$
is a unit in the ring of integers of $\mathbb Q(\sqrt{d})=\mathbb Q(\sqrt{n^2+16})$ whose norm to $\mathbb Q$ is $-1$.

Let $\epsilon$ be the fundamental unit of $k_n$. Then
$$
\frac{y_1\sqrt{d} + n_1}{2} = \epsilon^s
$$
for an odd integer $s$ since the norm is $-1$. Moreover, the norm of $\epsilon$ must also be $-1$.

When $n\equiv 2\pmod 4$, we have that $n_1$ is odd, so $\epsilon^s= (n_1+y_1\sqrt{d})/2\not\in \mathbb Z[\sqrt{d}]$. Therefore, $\epsilon\not\in \mathbb Z[\sqrt{d}]$ and  $t=\text{Trace}(\epsilon)$
is odd. 

When $n\equiv 8\pmod{16}$, we are assuming that $t$ is odd.

Note that 
$$
y_1= \frac{\epsilon^s-\overline{\epsilon}^s}{\sqrt{d}},
$$
where $\overline{\epsilon}$ denotes the Galois conjugate. 
The numbers $$
u_j=    \frac{\epsilon^j-\overline{\epsilon}^j}{\epsilon - \overline{\epsilon}}
$$
satisfy the relations 
$$
u_0=0, \quad u_1=1, \quad u_{j+2}=t u_{j+1} + u_j \text{ for } j\ge 0.
$$
We have
$$  \frac{\sqrt{d}}{\epsilon-\overline{\epsilon}} y_1=  u_s.
$$
\smallskip

Suppose there are integers $\ell$ and $m$ such that $K_{\ell}=K_m=K_n$. 
Since $m^2+16=dx^2$ and $\ell^2+16=dw^2$ for integers $x$ and $w$, and $d\equiv 5\pmod 8$, we must have $m\equiv 2\pmod 4$ or $m\equiv 8\pmod {16}$ and similarly for $\ell$. 

Write $m=2m_1$.  Then $x=2x_1$ for some integer $x_1$, and 
$$
\frac{x\sqrt{d}+m}{4}= \frac{x_1\sqrt{d}+m_1}{2}= \epsilon^r
$$
with $r$ odd. It follows, as with $y_1$, that 
$$
\frac{\sqrt{d}}{\epsilon-\overline{\epsilon}}x_1=u_r,
$$ 
where $u_r$ is a term in the above recurrent sequence.
Moreover, from Lemma (\ref{mainlemma}), 
$$
xyd\left(x\sqrt{d}+m\right)\left(y\sqrt{d}+n\right)
 = 64x_1y_1d\epsilon^{r+s}
\in \left( k_m\right)^2.
$$
Since $r+s$ is even and $d$ is a square in $k_m$,  we obtain that $x_1y_1$ is a square in $k_m$. This implies that
$$
u_r u_s = \left(\frac{\sqrt{d}}{\epsilon-\overline{\epsilon}}\right)^2 x_1y_1\in \left(k_m^{\times}\right)^2.
$$
Therefore, 
$$
u_r u_s\in \left(\mathbb Z\right)^2 \text{ or } u_r u_s\in d\left(\mathbb Z\right)^2.
$$

The above considerations also apply to $\ell$ and we obtain $u_q$ in the recurrent sequence with 
$$
u_r u_q\in \left(\mathbb Z\right)^2 \text{ or } u_r u_q\in d\left(\mathbb Z\right)^2.
$$

We now need the following result of J. H. E. Cohn \cite{cohn1}:

\begin{theorem} (Cohn) Let $a$ be an odd integer and define $P_n(a)$ by
$$
P_0(a)=0, \quad P_1(a) = 1, \quad P_{n+2}(a) = a\, P_{n+1}(a) + P_n(a) \text{ for } n\ge 0.
$$
The only solutions of the equation $y^2=P_r(a) P_s(a)$ with $0<r<s$ are (for arbitrary odd $b$) 
$$
(r, s,a)= (1, 2, b^2), (1, 12, 1),  (2, 12, 1), (3, 6, 1), (3, 6, 3).
$$
\end{theorem}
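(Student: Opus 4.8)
The plan is to reduce the equation to a question about square classes and then exploit the multiplicative structure of the sequence $P_n=P_n(a)$. First I would normalize the parameter: since one checks $P_n(-a)=(-1)^{n+1}P_n(a)$, and since for $a\ge 1$ every term $P_n$ with $n\ge 1$ is a positive integer, I may assume $a\ge 1$, a negative value of $a$ only reproducing solutions already present for $|a|$ with $r+s$ even. Now for two positive integers a product is a perfect square precisely when the two factors have the same squarefree kernel; thus $y^2=P_rP_s$ holds if and only if $P_r$ and $P_s$ lie in the same square class. The whole problem therefore becomes: describe the square class of $P_n(a)$ as a function of $n$, and find all collisions $\mathrm{sqf}(P_r)=\mathrm{sqf}(P_s)$ with $r<s$.

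One family is immediate, since $P_1=1$ is a square and $P_2=a$, so $P_1P_2=a$ is a square exactly when $a=b^2$, giving $(1,2,b^2)$. To handle larger indices I would bring in the companion Lucas sequence. Writing $\alpha,\beta=(a\pm\sqrt{a^2+4})/2$ for the roots of $z^2-az-1=0$, so that $\alpha\beta=-1$, and setting $Q_n=\alpha^n+\beta^n$, one has
\begin{equation*}
P_{2n}=P_nQ_n,\qquad Q_n^2-(a^2+4)P_n^2=4(-1)^n .
\end{equation*}
The second identity shows that any common prime of $P_n$ and $Q_n$ divides $4$; since $a$ is odd, a parity computation gives $2\mid P_n$ iff $2\mid Q_n$ iff $3\mid n$, and in that case exactly one factor of $2$ is shared, so $\gcd(P_n,Q_n)\in\{1,2\}$. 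The doubling identity then splits the square class of $P_{2n}$ into the essentially coprime contributions of $P_n$ and $Q_n$, and together with the strong divisibility property $\gcd(P_m,P_n)=P_{\gcd(m,n)}$ this sets up a descent on the $2$-adic valuation of the index.

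The core of the argument is a set of Cohn-type lemmas detecting which indices can make $P_n$, $2P_n$, $Q_n$, or $2Q_n$ a perfect square: one reduces the sequence modulo a modulus tied to its period and uses quadratic reciprocity to force the index into a short list. Combined with the descent above, this should confine any collision to small indices and then reproduce the sporadic solutions $(1,12,1)$, $(2,12,1)$, $(3,6,1)$, $(3,6,3)$ by direct computation for $a=1$ and $a=3$. An alternative opening move is to invoke a primitive divisor theorem (Carmichael, or Bilu--Hanrot--Voutier) to guarantee that for all large $s$ the term $P_s$ carries a prime dividing no earlier $P_r$, which immediately blocks $\mathrm{sqf}(P_r)=\mathrm{sqf}(P_s)$ and bounds $s$.

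I expect the main obstacle to be making the square detection uniform in $a$. The convenient modulus for the quadratic-residue argument depends on $a$, through the period of $P_n$ modulo small primes and through the factorization of $a^2+4$, so the congruence analysis must be organized generically rather than for one fixed sequence, presumably splitting into cases according to $a\bmod 8$ and the splitting type of $a^2+4$. The borderline small values $a=1$ (the Fibonacci case) and $a=3$ are exactly where the sporadic solutions live and where the generic estimates are weakest, so pinning down that precisely these five families occur, and no others, is the delicate endgame. A secondary technical point, should one take the primitive-divisor route, is controlling the multiplicity of a primitive prime, since $\mathrm{sqf}(P_s)$ is only obstructed when that prime occurs to an odd power.
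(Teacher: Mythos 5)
This statement is not proved in the paper at all: it is quoted verbatim as a result of J.~H.~E. Cohn and used as a black box, so there is no internal proof to compare your argument against. Judged on its own terms, your proposal is a reasonable roadmap that matches the general shape of Cohn's actual arguments (square classes, the companion sequence $Q_n=\alpha^n+\beta^n$, the identities $P_{2n}=P_nQ_n$ and $Q_n^2-(a^2+4)P_n^2=4(-1)^n$, strong divisibility, and descent on the $2$-part of the index), and the identities and gcd facts you state are correct. But it is not a proof. The entire content of the theorem is concentrated in the step you describe as ``a set of Cohn-type lemmas detecting which indices can make $P_n$, $2P_n$, $Q_n$, or $2Q_n$ a perfect square,'' and that classification --- uniformly over all odd $a$, which is precisely what distinguishes this theorem from the classical Fibonacci--Lucas case --- is asserted rather than established. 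You correctly identify this as the main obstacle, but identifying it does not discharge it; without those lemmas the descent has nothing to descend to, and the reduction to the five sporadic solutions cannot be carried out.

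Two further points. First, your alternative opening via primitive divisors does not close as stated: Carmichael or Bilu--Hanrot--Voutier guarantees a primitive prime $p\mid P_s$ for large $s$, but this obstructs $\mathrm{sqf}(P_r)=\mathrm{sqf}(P_s)$ only if $p$ divides $P_s$ to odd multiplicity, and controlling that multiplicity is a hard open-ended problem (of Wall--Sun--Sun type), not a technicality. Second, your sign normalization deserves more care: from $P_n(-a)=(-1)^{n+1}P_n(a)$ one gets $P_r(-a)P_s(-a)=(-1)^{r+s}P_r(a)P_s(a)$, so a solution for $a>0$ with $r+s$ even (such as $(2,12,1)$) would transfer to $a<0$; the theorem as stated lists only positive $a$, so either the statement implicitly restricts to $a\ge 1$ or the transfer must be addressed explicitly rather than dismissed.
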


Since $t$ is odd, Cohn's result applies to our situation. No exceptional case has both $r$ and $s$ odd. Therefore,
we must have $u_ru_s\in d\left(\mathbb Z\right)^2$ and $u_ru_q\in d\left(\mathbb Z\right)^2$. This implies $u_su_q$ is the square of an integer. Again, this is impossible by Cohn's theorem.  This completes the proof of parts (a) and (b) of 
Theorem~\ref{mainthm}.
\end{proof}

The only known example with $m\equiv 2\pmod 4$ where two of these fields are equal is $K_2=K_{22}$. For $n=2$, we have $n^2+16=y^2 d=2^2 \cdot 5$, so we have $u_s=y_1=1$.
For $m=22$, we have $m^2+16= x^2 d = 10^2\cdot 5$, so $u_r=x_1=5$. Therefore, $u_r u_s=d$. This shows that it is necessary to consider the case where $u_r u_s$ is $d$ times a square. Moreover, Cohn's theorem shows that there is no other $m$ such that $K_m=K_2=K_{22}$. 

\section{Proof of part ($c$)}\label{S:4}

\begin{proof}
From Equation (\ref{mainequ}), we know that if $K_m=K_n$ then 
$$
xyd\left(x\sqrt{d}+m\right)\left(y\sqrt{d}+n\right)\in \left(k_m^{\times}\right)^2.
$$
Fix $n$, which also fixes $y$ and $d$. Note that the norm of $x\sqrt{d}+m$ is $-16$, so, modulo squares of units, 
there are only finitely many possibilities for 
$x\sqrt{d} + m$. Therefore, there are only finitely many possibilities modulo squares of units for 
$$yd\left(x\sqrt{d}+m\right)\left(y\sqrt{d}+n\right).$$
 It follows that
 there are algebraic integers $c_1, \dots, c_N$, depending only on $n$,  such that
$c_i x= g^2$ for some $i$ and some $g\in k_n^{\times}$. Note that $g$ must be an algebraic integer.
Therefore, 
$$
c_i^2(m^2+16)= c_i^2 x^2 d= dg^4
$$
for some $i$. A theorem of Siegel implies that each curve $c_i^2(Y^2+16)= dX^4$ has only finitely many solutions in the algebraic integers of $k_n$. Therefore, there are only finitely many $m$ such that $K_m=K_n$. \end{proof}

{\bf Remark.}  Aruna C and P Vanchinathan \cite{c2023exceptional} considered the family of polynomials $x^4-nx^3-x^2+nx+1$ and  showed that for fixed $n$, there are only finitely many $m$
that yield the same field. Their proof  used  the fact that if $\alpha$ is a root of this polynomial then $1-\alpha$ is also a unit of the ring of integers (that is,
$\alpha$ is an {\it exceptional unit}). A theorem of Siegel says that a number field has only finitely many exceptional units. This result of Siegel and the one we used are
different consequences of his work on $S$-unit equations.

\section{Recurrent sequences}\label{S:5}

Let $k_n$ be the quadratic subfield of $K_n$ and let $\epsilon$ be the fundamental unit of $k_n$. Let
$$
u_j=    \frac{\epsilon^j-\overline{\epsilon}^j}{\epsilon - \overline{\epsilon}},  \qquad v_j= \epsilon^j+\overline{\epsilon}^j.
$$
If $t=\text{Trace}(\epsilon)$, then $u_j$ and $v_j$ satisfy  the relations
\begin{align*}
 u_0=0, \quad u_1=1, \quad &u_{j+2}=t u_{j+1} + u_j , \\
 v_0=2, \quad v_1=t, \quad  &v_{j+2}=t v_{j+1} + v_j
\end{align*}
for $ j\ge 0$.

The proofs of the following relations are straightforward from the definitions of $u_j$ and $v_j$:
\begin{gather*}
2u_{r+s}=u_rv_s+u_sv_r\\
u_{2r}= u_rv_r\\
v_{2r}=v_r^2 - 2 (\text{Norm}(\epsilon))^r.
\end{gather*}

\begin{theorem}  Let $n\equiv 2\pmod 4$. Write $n^2+16 = y^2 d$ and assume
$$
\epsilon = \frac{n+y\sqrt{d}}{4}$$
is the fundamental unit of $k_n$.  Let $r_0$ be the smallest odd positive index for which $u_r$ is divisible by $d$ and suppose 
there is a prime $p\equiv 1\pmod 4$ dividing $v_{r_0}$ such that $du_{r_0}$ is a quadratic nonresidue mod $p$. 
Then there is no positive integer $m\ne n$ with $K_m=K_n$.
\end{theorem}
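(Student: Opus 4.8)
The plan is to reduce to the framework of Section~\ref{S:3} and then eliminate the two surviving possibilities separately, using Cohn's theorem for one and the prime $p$ for the other. Because $\epsilon=\frac{n+y\sqrt d}{4}$ is assumed to be the fundamental unit of $k_n$, the index attached to $n$ in the analysis of Section~\ref{S:3} is $s=1$, so $u_s=u_1=1$. Hence if some positive integer $m\neq n$ satisfied $K_m=K_n$, that analysis would produce an odd integer $r$ with $\epsilon^r=\frac{x_1\sqrt d+m_1}{2}$ and with $u_r=u_ru_1\in(\mathbb{Z})^2$ or $u_r\in d(\mathbb{Z})^2$; moreover $r>1$, since $r=1$ forces $m=n$. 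The case $u_r\in(\mathbb{Z})^2$ is dispatched at once: as $n\equiv 2\pmod 4$ the trace $t$ is odd (Section~\ref{S:3}), so Cohn's theorem applies to $u_r=P_r(t)=P_1(t)P_r(t)$, and no tuple in Cohn's list has the form $(1,r)$ with $r$ odd and larger than $1$. Thus $u_r$ is never a perfect square, and everything rests on excluding $u_r\in d(\mathbb{Z})^2$.

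So suppose $u_r=d g^2$ with $r>1$ odd. First I would determine which indices can occur. Since $d$ is the squarefree part of $t^2+4$, every prime $p_i\mid d$ divides the discriminant of the recurrence, so modulo $p_i$ the polynomial $X^2-tX-1$ has a double root $\alpha$, which is a unit, and $u_j\equiv j\,\alpha^{\,j-1}\pmod{p_i}$. Hence $p_i\mid u_j\iff p_i\mid j$, and as $d$ is squarefree $d\mid u_j\iff d\mid j$; in particular $r_0=d$ and the odd indices $j$ with $d\mid u_j$ are exactly the odd multiples of $r_0$, so $r=c\,r_0$ with $c$ odd. Next I would run down this subsequence modulo $p$. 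From the standard Lucas identity $u_{(k+1)r_0}=v_{r_0}u_{kr_0}-(\mathrm{Norm}\,\epsilon)^{r_0}u_{(k-1)r_0}$, which follows from the relations of Section~\ref{S:5}, together with $p\mid v_{r_0}$, $\mathrm{Norm}\,\epsilon=-1$, and $r_0$ odd, one gets $u_{(k+1)r_0}\equiv u_{(k-1)r_0}\pmod p$. Starting from $u_0=0$ and $u_{r_0}$, this gives $u_{cr_0}\equiv u_{r_0}\pmod p$ for every odd $c$, hence $u_r\equiv u_{r_0}\pmod p$.

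Multiplying $u_r=dg^2$ by $d$ and reducing modulo $p$ now yields $d\,u_{r_0}\equiv d\,u_r=(dg)^2\pmod p$. The hypothesis that $d\,u_{r_0}$ is a nonresidue forces $p\nmid d\,u_{r_0}$, hence $p\nmid d$, $p\nmid u_{r_0}$, and (since $u_r\equiv u_{r_0}\not\equiv 0$) $p\nmid g$; thus $(dg)^2$ is a nonzero square and $d\,u_{r_0}$ is a quadratic residue mod $p$, contradicting the assumption. This excludes $u_r\in d(\mathbb{Z})^2$ and finishes the proof. I expect the main obstacle to be the two ingredients of the middle paragraph: the apparition statement $d\mid u_j\iff d\mid j$, which confines $r$ to odd multiples of $r_0$, and the congruence $u_{cr_0}\equiv u_{r_0}\pmod p$; once these are established the residue contradiction is immediate. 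I would also keep in mind that the contradiction uses only $p\mid v_{r_0}$ and the nonresidue condition: by the Pell relation $v_{r_0}^2+4=(t^2+4)u_{r_0}^2$ the number $d$ is automatically a residue mod $p$, so the nonresidue hypothesis is really a statement about $u_{r_0}$, and the congruence $p\equiv 1\pmod 4$ (equivalently, $-1$ a residue mod $p$) is the natural condition linking this argument to the elliptic curves of Section~\ref{S:6}.
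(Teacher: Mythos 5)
Your proof is correct, and while its skeleton matches the paper's (Cohn's theorem forces $u_r\in d(\mathbb{Z})^2$, one then shows $r$ is an odd multiple of $r_0$ and computes $u_r$ modulo $p$), you execute the two middle steps with genuinely different lemmas. To confine $r$ to odd multiples of $r_0$, the paper invokes the strong divisibility property $\gcd(u_{r_0},u_r)=u_{\gcd(r_0,r)}$; you instead use the law of apparition at the ramified primes: since $t^2+4=y_1^2d$, each $p_i\mid d$ gives a double unit root of $X^2-tX-1$ and $u_j\equiv j\alpha^{j-1}\pmod{p_i}$, whence $d\mid u_j\iff d\mid j$. This buys the extra information $r_0=d$, which the paper never states but which is confirmed by its examples ($r_0=13,29,53$ for $d=13,29,53$). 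For the reduction modulo $p$, the paper uses $2u_{r_0+2\ell r_0}=u_{r_0}v_{2\ell r_0}+u_{2\ell r_0}v_{r_0}$ together with a lemma giving $v_{2\ell r_0}\equiv\pm2\pmod p$, and then needs $p\equiv1\pmod4$ to absorb the ambiguous sign in the Legendre symbol; your three-term recursion $u_{(k+1)r_0}=v_{r_0}u_{kr_0}+u_{(k-1)r_0}$ (valid since $\mathrm{Norm}(\epsilon)=-1$ and $r_0$ is odd) yields the exact congruence $u_{cr_0}\equiv u_{r_0}\pmod p$ for odd $c$, so the hypothesis $p\equiv1\pmod4$ is never used. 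This is a genuine, if small, strengthening: you prove the theorem without that hypothesis (and indeed, tracking the sign of $\mathrm{Norm}(\epsilon)^{\ell r_0}$ shows the paper's $\pm2$ is always $+2$ here, so the hypothesis is superfluous in their argument as well). Your closing remark that $d$ is automatically a quadratic residue modulo $p$ is also correct, via $v_{r_0}^2+4=(t^2+4)u_{r_0}^2$ and $t^2+4=y_1^2d$, so the nonresidue condition is really a condition on $u_{r_0}$ alone. The only point worth making explicit is that the case $c=1$ (i.e., $r=r_0$) is covered by the same congruence argument, which your write-up does handle.
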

\begin{proof} As before, write $k_n=\mathbb Q(\sqrt{d})$ with $d$ squarefree. Then $n^2+16 = dy^2$ for some even integers $n, y$, and
$$
\epsilon = \frac{n+y\sqrt{d}}{4}.
$$ 
Suppose $K_m=K_n$.
From the proof of parts (a) and (b) of Theorem \ref{mainthm} in Section 3, we know that $m^2+16=dx^2$ for some $x$, and we have 
$$
 \frac{\sqrt{d}}{\epsilon-\overline{\epsilon}}(x/2) = u_r
$$ for some odd integer $r$. Moreover, $u_r=u_1 u_r=d h^2$ for some integer $h$, because Cohn's result says that $u_1 u_r$ cannot be a square.
Therefore, $d$ divides $u_r$. Let $g=\gcd(r_0, r)$. Since  $u_g = \gcd(u_{r_0}, u_r)$ and  $r_0$ is the smallest positive index with $d$ dividing $u_{r_0}$, 
we cannot have $g<r_0$, so $g=r_0$ and $r_0\mid r$, which implies that $r_0$ is odd. Therefore, we may write $r=(1+ 2\ell) r_0$ for some $\ell \ge 0$. 

\begin{lemma}
Let $r_1, r_2$ be positive integers with $r_1\mid r_2$ and $r_2/r_1$ even. If $p$ is a prime with $p\mid v_{r_1}$, then $v_{r_2}\equiv \pm 2\pmod p$.
\end{lemma}
\begin{proof}
Write $r_2/r_1=2^e\ell$ for some $e>0$ and some odd $\ell>0$. Since 
$$
v_{r_1}=\epsilon^{r_1} + \overline{\epsilon}^{r_1} \text{ divides } v_{r_1\ell}=\epsilon^{r_1\ell} + \overline{\epsilon}^{r_1\ell},
$$
we have $p\mid v_{r_1\ell}$.  Therefore, $v_{2r_1\ell} = v_{r_1\ell}^2-2 \equiv -2\pmod p$.
This implies $v_{4r_1\ell} = v_{2r_1\ell}^2-2 \equiv 2\pmod p$. Continuing in this way, we obtain the desired result.
\end{proof}

Returning to the proof of the theorem and working mod $p$, we have
\begin{align*}
2u_r &\equiv 2u_{r_0+2\ell r_0} \equiv u_{r_0} v_{2r_0\ell} + u_{2r_0\ell}v_{r_0}\\
& \equiv  u_{r_0} v_{2r_0\ell}  \equiv \pm 2  u_{r_0} \pmod p
\end{align*}
by the lemma.
Therefore,
$$
\left(\frac{du_r}{p}\right) = \left(\frac{\pm d u_{r_0}}{p}\right) = \left(\frac{\pm 1}{p}\right)\left(\frac{d u_{r_0}}{p}\right) = -1
$$
since $p\equiv 1\pmod 4$ and $d u_{r_0}$ is a quadratic non-residue mod $p$ by assumption.
Therefore, $d u_r$ cannot be a square. This completes the proof.
\end{proof}

{\bf Examples.}  $\boldsymbol{n=6:}$ The fundamental unit of $k_6$ is
$$\epsilon = \frac{6+\sqrt{52}}{4}=\frac{3+\sqrt{13}}{2}.$$
We have $u_{13}=13\times 118717$, so $r_0=13$. Since $v_{13}= 5564523\equiv 0 \pmod {53}$,  we take $p=53$. Since $13 u_{13}$ is a quadratic nonresidue mod $53$, the theorem applies and we find that there is no $m\ne 6$ with $K_m=K_6$.

$\boldsymbol{n=10:}$ The fundamental unit is $\epsilon = (5+\sqrt{29})/2$. We have $r_0=29$ and $p=64233493$. There is no other $m$ with $K_m=K_{10}$.

$\boldsymbol{n=14:}$ The fundamental unit is $\epsilon = (7+\sqrt{53})/2$. We have $r_0=53$ and $p=408359633417260832077$. There is no other $m$ with $K_m=K_{14}$.

These are the only examples we have found where the theorem applies. There are probably more, but finding a suitable prime $p$ requires 
finding factors of $v_{r_0}$, which is very large.

{\bf Example.} Consider $K_4$. The fundamental unit $\epsilon = 1+\sqrt{2}$ of the quadratic subfield $k_4=\mathbb Q(\sqrt{2})$ has norm $-1$ but has even trace, 
so Theorem \ref{mainthm} does not apply. We know that $u_1=1^2$ and $u_7=13^2$. Therefore, $u_1u_7$ is a square, which corresponds to the equality $K_4=K_{956}$. Peth\H o \cite{petho}
showed that $u_1$ and $u_7$ are the only squares in this sequence. Therefore, if there if there is another $\ell$ with $K_{\ell}=K_4=K_{956}$, then we obtain
an odd index $s$ with $u_1 u_s $ equaling twice a square (that is, $d$ times a square). But the $u_j$ satisfy $u_{j+2}= 2u_{j+1}+u_j$, so all the terms with odd indices are odd.
Therefore, there is no other $\ell$ with $K_{\ell}=K_4=K_{956}$.

\section{Elliptic Curves}\label{S:6}

\begin{proposition} Let $t$ be a positive integer. Consider the sequence defined by $u_0=0, \quad u_1=1, \quad u_{j+2}=t u_{j+1} + u_j$.
\newline
(a) There is an injection from the set of $u_j$ with odd $j$  in the sequence that are squares to the set of integer points $(x,y)$ on the curve 
$$C_1: \, y^2=(t^2+4)x^4-4$$ with $x, y\ge 0$. 
The point $(x,y)$ corresponds to $u_j=x^2$.
\newline
(b)  Write $t^2+4 = d z^2$ for an integer $z$ and squarefree $d$. There is an injection from the set of $u_j$ with odd $j$  in the sequence such that  $u_j/d$ is the square of an integer
to the set of integer points $(x,y)$ on the curve 
$$C_2:\, y^2=d^2(t^2+4)x^4-4$$ with $x, y\ge 0$. The point $(x, y)$ corresponds to $u_j=dx^2$.\newline
(c) If $\omega=(t+\sqrt{t^2+4})/2$ is the fundamental unit of $\mathbb Q(\sqrt{t^2+4})$, then the injections of parts (a) and (b) are bijections.
\end{proposition}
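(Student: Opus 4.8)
The plan is to reduce everything to two standard identities for $u_j$ and its companion sequence $v_j=\omega^j+\overline{\omega}^j$, where $\omega=(t+\sqrt{t^2+4})/2$, $\overline{\omega}$ is its conjugate, $\omega\overline{\omega}=-1$, and $\omega-\overline{\omega}=\sqrt{t^2+4}$. Writing $D=t^2+4$, the first identity is
$$
\omega^j=\frac{v_j+u_j\sqrt{D}}{2},
$$
obtained by expanding and using $\sqrt{D}=\omega-\overline{\omega}$, and the second is the Pell-type relation
$$
v_j^2-D\,u_j^2=4\,(\text{Norm}\,\omega)^j=4(-1)^j,
$$
which for odd $j$ reads $v_j^2=D\,u_j^2-4$. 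Both follow immediately from the definitions, as with the relations already listed in Section~\ref{S:5}.

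For part (a), I would take an odd $j$ with $u_j=x^2$ ($x\ge 0$) and substitute into $v_j^2=D\,u_j^2-4$ to obtain $v_j^2=(t^2+4)x^4-4$; then $(x,|v_j|)$ is an integer point on $C_1$ with both coordinates nonnegative. The assignment $u_j\mapsto(x,|v_j|)$ is well defined because, for a positive integer $t$, the sequence $(u_j)_{j\ge 1}$ is strictly increasing on the odd indices, so a square value $u_j$ determines $j$ and hence $|v_j|$; injectivity is then immediate, since the point $(x,y)$ recovers $u_j=x^2$. Part (b) is identical after replacing $u_j=x^2$ by $u_j=dx^2$: substituting into $v_j^2=D\,u_j^2-4$ gives $v_j^2=d^2(t^2+4)x^4-4$, which lands on $C_2$, and the same monotonicity argument yields injectivity.

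For part (c) the content is surjectivity, and this is where the hypothesis that $\omega$ is the fundamental unit is essential. Given an integer point $(x,y)$ on $C_1$ with $x,y\ge 0$, I would form
$$
\alpha=\frac{y+x^2\sqrt{D}}{2},
$$
and check directly that $\text{Norm}(\alpha)=(y^2-Dx^4)/4=-1$ and $\text{Trace}(\alpha)=y$; hence $\alpha$ is a root of $T^2-yT-1$, an algebraic integer, and a unit of norm $-1$, so it lies in the maximal order of $\mathbb{Q}(\sqrt{D})$. Since $\omega$ is the fundamental unit of $\mathbb{Q}(\sqrt{D})$, every such unit is $\pm\omega^k$, and norm $-1$ forces $k$ odd; as $\alpha>0$ and every power of $\omega$ is positive, the sign is $+$. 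Comparing the rational and $\sqrt{D}$ parts of $\alpha=\omega^k=(v_k+u_k\sqrt{D})/2$ gives $y=v_k$ and $x^2=u_k$; because $x\ge 1$ forces $y>0$, and $v_k>0$ holds precisely for positive odd $k$, we get $k\ge 1$. Thus $u_k=x^2$ is a square of odd positive index mapping back to $(x,y)$, which proves surjectivity. Part (b) is handled the same way with $\alpha=(y+dx^2\sqrt{D})/2$, giving $u_k=dx^2$.

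The main obstacle I expect is the bookkeeping in part (c): verifying that $\alpha$ is genuinely a unit of the maximal order, so that the fundamental-unit hypothesis applies, and then pinning down both the sign and the positive odd index $k$ from the constraints $x,y\ge 0$. Without the assumption in (c), $\alpha$ could be an odd power of a unit strictly smaller than $\omega$, which need not be an integer power of $\omega$, so it would not arise from any term $u_j$ of the sequence. This is precisely the gap the hypothesis closes, and it is the reason parts (a) and (b) yield only injections in general.
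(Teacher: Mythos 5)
Your proof is correct and follows essentially the same route as the paper: the Pell-type identity $v_j^2-(t^2+4)u_j^2=4(-1)^j$ gives the injections, and for surjectivity one forms $\alpha=(y+x^2\sqrt{t^2+4})/2$, shows it is a unit of norm $-1$ in the maximal order, and invokes the fundamental-unit hypothesis to write $\alpha=\omega^k$ with $k$ odd and positive. The one small (and clean) variation is that you certify $\alpha$ is an algebraic integer by exhibiting its minimal polynomial $T^2-yT-1$, whereas the paper argues by a parity check on $x$, $y$, and $t$; both are fine.
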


\begin{proof}
Let $\overline{\omega}=(t-\sqrt{t^2+4})/2$ and $u_j=(\omega^j-\overline{\omega}^j)/(\omega-\overline{\omega})$. Let $v_j=\omega^j+\overline{\omega}^j$.
Then 
$$
(t^2+4)u_j^2+4(\omega\overline{\omega})^j=v_j^2.$$
If $j$ is odd and $u_j$ is a square, the point $(\sqrt{u_j}, v_j)$ lies on the curve $C_1$.
Similarly, if $j$ is odd and $u_j/d$ is a square, the point $(\sqrt{u_j/d}, v_j)$ lies on the curve $C_2$.

Conversely, suppose $(x, y)$ is a point on the curve $C_1$ with $y\ge 0$. Let 
$$
\alpha=\frac{y+x^2\sqrt{t^2+4}}{2}.
$$
If $t$ is odd, $x$ and $y$ have the same parity, so $\alpha$ is an algebraic integer.  If $t$ is even, $y$ is even and $(1/2)\sqrt{t^2+4}$ is an algebraic integer, so again $\alpha$ is an algebraic integer. The norm of $\alpha$ is $-1$, so $\alpha=\omega^j$ for some odd $j$. It follows that $x^2=u_j$, so the map in part (a) is surjective. The case of $C_2$
follows similarly.
\end{proof}

\begin{proposition}\label{ellprop} Let $t$ and $d$ be nonzero integers. Define the elliptic curves
$$
E_1: \, Y_1^2= X_1^3-4(t^2+4)X_1, \qquad E_2:\, Y_2^2=X_2^3 - 4d^2(t^2+4)X_2.
$$
There are degree 2 rational maps $\phi_i: C_i\to E_i$ given by
\begin{gather*}
\phi_1(x, y) = \left((t^2+4)x^2, (t^2+4)xy\right), \\  \phi_2(x, y) = \left(d^2(t^2+4)x^2, \,  d^2(t^2+4)xy\right)
\end{gather*}
\end{proposition}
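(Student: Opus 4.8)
The plan is to verify the two assertions in turn: first that each $\phi_i$ genuinely carries $C_i$ into $E_i$, and then that the resulting dominant rational map has degree exactly $2$. To streamline the bookkeeping I would set $A=t^2+4$ and $B=d^2(t^2+4)$, so that $C_1:y^2=Ax^4-4$, $C_2:y^2=Bx^4-4$, $E_1:Y_1^2=X_1^3-4AX_1$, and $E_2:Y_2^2=X_2^3-4BX_2$, after which the two cases become literally the same computation with $A$ replaced by $B$.

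For the first assertion I would simply substitute. With $\phi_1(x,y)=(Ax^2,Axy)$ I compute
$$
X_1^3-4AX_1=(Ax^2)^3-4A(Ax^2)=A^2x^2(Ax^4-4)=A^2x^2y^2=(Axy)^2=Y_1^2,
$$
where the defining equation $y^2=Ax^4-4$ of $C_1$ is used in the middle step. The identical computation with $B$ in place of $A$ handles $\phi_2$. This shows $\phi_i(C_i)\subseteq E_i$, so each $\phi_i$ is a well-defined rational map, regular on the affine locus.

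For the degree I would exhibit the involution $\iota\colon(x,y)\mapsto(-x,-y)$. It is an automorphism of $C_i$, since the defining equation is even in $x$ and quadratic in $y$, and it satisfies $\phi_i\circ\iota=\phi_i$ because both coordinates $Ax^2$ and $Axy$ are unchanged by $\iota$. Hence $\phi_i$ factors through the quotient by $\iota$, and a generic point $(X_1,Y_1)$ of $E_1$ has precisely the two preimages $(x,y)$ and $(-x,-y)$. To pin the degree at exactly $2$ I would pass to function fields: $\phi_1^*\mathbb{Q}(E_1)=\mathbb{Q}(x^2,xy)\subseteq\mathbb{Q}(C_1)=\mathbb{Q}(x,y)$. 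Putting $s=x^2$, the element $xy$ satisfies $(xy)^2=s(As^2-4)\in\mathbb{Q}(s)$, and $s(As^2-4)$ is an odd-degree polynomial in $s$, hence not a square in $\mathbb{Q}(s)$; therefore $[\mathbb{Q}(x^2,xy):\mathbb{Q}(x^2)]=2$. Combined with $[\mathbb{Q}(x,y):\mathbb{Q}(x^2)]=4$ (the tower $\mathbb{Q}(x^2)\subset\mathbb{Q}(x)\subset\mathbb{Q}(x,y)$ has both steps of degree $2$), this yields $[\mathbb{Q}(x,y):\mathbb{Q}(x^2,xy)]=2$, so $\deg\phi_1=2$. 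The argument for $\phi_2$ is verbatim.

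These computations are elementary, so I do not anticipate a deep obstacle; the only point demanding genuine care is the degree claim, since the substitution alone shows only that $\phi_i$ is a nonconstant map landing on $E_i$. The function-field index above (equivalently, the generic fibre count furnished by $\iota$) is thus the crucial step. A secondary technical nuisance is confirming that the map extends to the smooth projective models and behaves correctly at the two points at infinity of $C_i$, where $\iota$ interchanges the two branches; but for the stated assertion about a degree-$2$ rational map the generic computation already suffices, so I would relegate the points at infinity to a brief remark rather than a full analysis.
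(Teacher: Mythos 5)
Your verification that $\phi_i$ lands on $E_i$ is exactly the paper's proof: the same substitution $X^3-4AX=A^2x^2(Ax^4-4)=(Axy)^2$, with the paper likewise doing the $d$-case and noting $d=1$ recovers $\phi_1$. Your additional function-field argument pinning the degree at exactly $2$ (via the involution $(x,y)\mapsto(-x,-y)$ and the tower $\mathbb{Q}(x^2)\subset\mathbb{Q}(x^2,xy)\subset\mathbb{Q}(x,y)$) is correct and goes beyond the paper, which asserts the degree without proof.
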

\begin{proof} For $\phi_2$, we have
\begin{gather*}
Y_2^2= d^4(t^2+4)^2x^2y^2= d^4(t^2+4)^2x^2\left(d^2(t^2+4)x^4-4\right)\\
=\left(d^2(t^2+4)x^2\right)^3-4d^2(t^2+4)\left(d^2(t^2+4)x^2\right) = X_2^3 - 4d^2(t^2+4)X_2.
\end{gather*}
The case $d=1$ is the calculation for $\phi_1$.\end{proof}

Note that $\phi_1$ and $\phi_2$ take integral points on $C_1$ and $C_2$ to integral points on $E_1$ and $E_2$, respectively.
The point $(1,t)$ on $C_1$ maps to $(t^2+4, t(t^2+4))$ on $E_1$. There is another point $(-4, 4t)$ on $E_1$, and a quick calculation shows that
$$
(t^2+4, t(t^2+4)) + (0,0)=(-4, 4t).
$$
\begin{proposition} The only non-trivial torsion point in $E_2(\mathbb Q)$ is $(0,0)$ (the case $d=1$ gives the result for $E_1$) . \end{proposition}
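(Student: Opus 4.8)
The plan is to show that $E_2(\mathbb{Q})_{\mathrm{tors}}=\{O,(0,0)\}\cong \mathbb{Z}/2\mathbb{Z}$ by separating the $2$-primary part from the odd part. Writing $a=-4d^2(t^2+4)$, the curve is $Y_2^2=X_2^3+aX_2$ (nonsingular, since $a\ne 0$ as $d\ne 0$), and the structural fact I will lean on throughout is that $a<0$. The argument then splits into three steps: determine the rational $2$-torsion, rule out points of order $4$, and rule out odd torsion.

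First I would pin down the rational $2$-torsion, namely the points with $Y_2=0$, i.e. the rational roots of $X_2(X_2^2+a)=0$. A root $X_2\ne 0$ would require $-a=4d^2(t^2+4)$ to be a perfect square, hence $t^2+4$ to be a perfect square; but for nonzero integer $t$ one has $t^2<t^2+4<(|t|+1)^2$ once $|t|\ge 2$, while $t=\pm1$ gives $5$, so $t^2+4$ is never a square. Thus $(0,0)$ is the unique rational point of order $2$. To exclude a point $P$ of order $4$, note that it would satisfy $2P=(0,0)$, so by the duplication formula its $X_2$-coordinate $x_1$ obeys $(x_1^2-a)^2=0$, i.e. $x_1^2=a$. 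Since $a<0$ this has no real solution, so $E_2$ has no point of order $4$ even over $\mathbb{R}$. Hence the $2$-primary torsion is exactly $\langle(0,0)\rangle\cong\mathbb{Z}/2\mathbb{Z}$.

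The remaining and main step is to exclude odd torsion, for which I would reduce modulo primes $p\equiv 3\pmod 4$ of good reduction. For such $p$ the character sum $\sum_{x\in\mathbb{F}_p}\left(\tfrac{x^3+ax}{p}\right)$ vanishes: pairing $x$ with $-x$ and using that $x^3+ax$ is odd together with $\left(\tfrac{-1}{p}\right)=-1$ makes the two terms cancel, so $\#E_2(\mathbb{F}_p)=p+1$. Now fix an odd prime $q$. Since $\gcd(4,q)=1$, Dirichlet's theorem produces a prime $p$ with $p\equiv 3\pmod 4$ and $p\equiv 1\pmod q$ lying outside the finite set of primes of bad reduction; then $p\ne q$ and $q\nmid p+1=\#E_2(\mathbb{F}_p)$. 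Because reduction is injective on prime-to-$p$ torsion, $E_2(\mathbb{Q})$ has no point of order $q$. As $q$ was an arbitrary odd prime, there is no odd torsion, and combining with the previous paragraph yields $E_2(\mathbb{Q})_{\mathrm{tors}}=\{O,(0,0)\}$. Setting $d=1$ gives the statement for $E_1$.

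I expect the only real subtlety to be uniformity in $d$ and $t$: for each odd prime $q$ the argument must produce an auxiliary prime $p$ simultaneously satisfying the two congruences \emph{and} avoiding the finite, but $(d,t)$-dependent, set of primes of bad reduction, which is precisely what Dirichlet guarantees. Everything else is elementary. As an alternative one could simply invoke the classical determination of $E(\mathbb{Q})_{\mathrm{tors}}$ for curves $y^2=x^3+ax$, which gives $\mathbb{Z}/2\mathbb{Z}$ unless $a=4$ or $-a$ is a perfect square; here $a<0$ rules out $a=4$ and the computation above rules out $-a$ being a square, so neither exceptional case occurs.
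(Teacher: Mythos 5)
Your proof is correct, but it takes a genuinely different route from the paper, which disposes of the statement in one line by observing that $4d^2(t^2+4)$ is not a perfect square (for nonzero $t$) and citing Theorem 5.2 of Knapp's book on elliptic curves --- essentially the classical determination of $E(\mathbb Q)_{\mathrm{tors}}$ for curves $y^2=x^3+ax$ that you mention as an ``alternative'' in your last paragraph; that alternative \emph{is} the paper's proof. Your main argument instead reproves the needed case of that classification from scratch: the square-testing of $-a=4d^2(t^2+4)$ to pin down the $2$-torsion (reducing correctly to the fact that $t^2+4$ is never a square for $t\ne 0$, which is where the standing hypothesis $t\ne 0$ from the preceding proposition is used --- the paper's one-liner relies on the same fact), the duplication formula with $a<0$ to kill points of order $4$, and supersingularity at primes $p\equiv 3\pmod 4$ combined with Dirichlet to kill odd torsion; each step checks out, including the compatibility of the congruences mod $4$ and mod $q$ and the injectivity of reduction on prime-to-$p$ torsion. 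What your version buys is self-containedness and uniformity in $(d,t)$ without any external citation; what the paper's version buys is brevity. Neither has a gap.
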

\begin{proof} 
Since $4d^2(t^2+4)$ cannot be a square,   Theorem 5.2 of Knapp \cite{knapp} implies that the torsion subgroup has order 2. \end{proof}

Note that $(0,0) \not\in \phi_i(C_i(\mathbb Q))$. Therefore, a (finite) rational point on $C_i$ gives a point of infinite order in the Mordell-Weil group of $E_i$.
Calculation of the root number $w$ of $E_1$ using the formulas of Birch and Stephens \cite{birch_stephens1} yields the following:
$$
w=\begin{cases} -1 \text{ if } t \not\equiv 0\pmod 8, \\  +1 \text{ if } t \equiv 0\pmod 8. \end{cases}
$$
The formula provided by Birch and Stephens applies to curves defined by an equation of the form $y^2 = x^3-Dx$ 
with $D \in \Z  \smallsetminus 4\Z$ and fourth-power free.  This provides no obstacle to our use of their formula, as it is 
straightforward to find a curve isogenous to E that has the requisite form.

For example, suppose that $t \equiv 0 \pmod{8}$.  Then $t^2+4 \equiv 0 \pmod{4}$ and so we may rewrite the equation defining $E_1$ 
as $Y_1^2 = X_1^3-16((t/2)^2+1)X_1$.  Write $(t/2)^2+1 = rs^4$ with $r > 0$ and fourth-power free.  Since $t \equiv 0 \pmod{8}$, we conclude that $1 \equiv (t/2)^2+1 = rs^4 \equiv r  \pmod{16}$.  In particular we note that $r \not\equiv 0 \pmod{4}$.  The curve $E_1$ is isomorphic to the 
curve \[
E:\, Y^2 = X^3-rX,
\] via $(X_1,\,Y_1) \mapsto (X_1/(2s)^2,\,Y_1/(2s)^3)$.  The formula in \cite{birch_stephens1} says $w_\infty(E) = \sgn(-r)=-1$.  
Since $r \equiv 1 \pmod{16}$, we have that $w_2(E(\Q))=-1$.  We note that $r$ has no prime divisor $p \equiv 3 \pmod{4}$, since the existence of such a prime $p$ would imply that $(t/2)^2 \equiv -1 \pmod{p}$ and hence that $-1$ is a quadratic 
residue modulo $p$, an impossibility.  Therefore, for each odd prime $p$ divisor of $r$, the local root number $w_p(E) = +1$.  Hence the global root number
\[
w(E) = w_\infty w_2 \prod_{p^2 \mid\mid r} w_p = +1.
\]

The case when $t \not\equiv 0 \pmod{8}$ can be handled similarly.

If it could be shown for some $t$ that the only integral points on $E_1$ are $(t^2+4, t(t^2+4))$
and $(-4, 4t)$, or if we could find all integral points on $E_2$, then we would obtain information on the field isomorphism problem.

In Proposition \ref{ellprop}, we could have chosen to have birational maps to the isogenous curves  $Y_1^2= X_1^3+(t^2+4)X_1$ and $ Y_2^2=X_2^3 +d^2(t^2+4)X_2$, but then integral points on $C_i$ would not necessarily map to integral points on $E_i$.  In the cases we are interested in, $t^2+4=x_0^2 d$ for some $x_0$. There is an isomorphism
\begin{gather*}
\psi: E_2:\, Y_2^2=X_2^3 - 4d^2(t^2+4)X_2 \longrightarrow E_3:\, Y_3^2=X_3^3 - 4(t^2+4)^3X_3\\
 (X_2, Y_2) \longmapsto (x_0^2 X_2, x_0^3 Y_2).
\end{gather*}
This has the advantage that $E_3$ lies in an explicit one-parameter family.

Computations indicate that the Mordell-Weil rank of $E_3$ is usually 0 or 1. 
When it is $0$, we have that $C_2$ has no integral points.  In a manner similar to that discussed above, application of the formulas of Birch and Stephens \cite{birch_stephens1} to (curves isogenous to) $E_3$ yields that the 
global root number $w(E_3) = (-1)^t$.

While doing computations with these curves, we noticed the following:
when $t$ is even, the rank of $E_3$ is usually 0. However, if $t$ is even and the rank of $E_3$ is 2, then the rank of $E_1$ is typically greater than or equal to 2. 
This holds for the 46 cases with $0<  t\le 3000$ with the one exception of $t=2990$, where the rank of $E_3$ is 2 and the rank of $E_1$ is 1.   We do not currently have an explanation for either the general phenomenon when the rank of $E_3$ is $2$ or for the exception when $t=2990$.

\bibliographystyle{amsplain}
\bibliography{refs}

\end{document}